\newtheorem{theorem}{Theorem}[section]
\newtheorem{proposition}[theorem]{Proposition}
\newtheorem{lemma}[theorem]{Lemma}
\theoremstyle{definition}
\newtheorem{definition}[theorem]{Definition}
\theoremstyle{remark}
\newtheorem{remark}[theorem]{Remark}
\numberwithin{equation}{section}
\renewcommand{\epsilon}{\varepsilon}
\renewcommand{\phi}{\varphi}
\renewcommand{\kappa}{\varkappa}
\newcommand{\dist}{\text{dist}}
\newcommand{\pdist}{\text{pdist}}
\newcommand{\per}{\text{per}}
\newcommand{\pper}{\text{pper}}
\newcommand{\diam}{\text{diam}}
\begin{document}

\title{On the total perimeter of disjoint convex bodies}

\author{Arseniy Akopyan and Alexey Glazyrin}

\address{Arseniy Akopyan, FORA Capital, Miami, FL, USA}
\email{akopjan@gmail.com}

\address{Alexey Glazyrin, School of Mathematical \& Statistical Sciences, The University of Texas Rio Grande Valley, Brownsville, TX 78520, USA}
\email{alexey.glazyrin@utrgv.edu}


\begin{abstract}
In this note we introduce a pseudometric on convex planar curves based on distances between normal lines and show its basic properties. Then we use this pseudometric to give a short proof of the theorem by Pinchasi that the sum of perimeters of $k$ convex planar bodies with disjoint interiors contained in a convex body of perimeter $p$ and diameter $d$ is not greater than $p+2(k-1)d$.
\end{abstract}

\maketitle

\section{Introduction}

For a convex body $C$, we denote its perimeter by $\per(C)$ and its diameter by $\diam(C)$. Given a convex body $C$, it is natural to find the maximal total perimeter of $k$ disjoint convex bodies confined to $C$. Glazyrin and Mori\'{c} studied this problem in \cite{GM14} and conjectured that the upper bound is always $\per(C) + 2(k-1)\diam(C)$. They proved this bound for some particular cases and made partial progress towards the general conjecture by showing the upper bound $1.22195\per(C) + 2(k-1)\diam(C)$. In \cite{P17}, Pinchasi proved the general conjecture.

\begin{theorem}\label{thm:main}[Pinchasi]
If convex planar bodies $C_i$, $1\leq i\leq k$, with disjoint interiors are contained in a convex planar body $C$, then
$$\sum\limits_{i=1}^k \per(C_i)\leq \per(C) + 2(k-1)\diam(C).$$
\end{theorem}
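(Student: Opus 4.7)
I would prove the theorem by induction on $k$. The base case $k=1$ reduces to $\per(C_1)\leq \per(C)$ for $C_1\subseteq C$ both convex, which is the classical monotonicity of perimeter under inclusion of convex bodies (nearest-point projection onto $C$ is a contraction of $\partial C_1$ onto $\partial C$).

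\textbf{Inductive step.} For $k\geq 2$ I would try to produce a line $\ell$ with the following two properties: (a) $\ell$ does not enter the interior of any of the $C_i$, and (b) $\ell$ splits the family $\{C_i\}$ into two nonempty sub-families. Such an $\ell$ cuts $C$ into two convex pieces $C'=C\cap H^+$ and $C''=C\cap H^-$, with $k'$ and $k''$ of the bodies on each side, where $k'+k''=k$ and $k',k''\geq 1$. Applying the inductive hypothesis on each piece, using $\diam(C'),\diam(C'')\leq \diam(C)$, adding the two inequalities, and inserting the standard identity
$$\per(C')+\per(C'')=\per(C)+2|\ell\cap C|\leq \per(C)+2\diam(C),$$
yields the claimed bound $\sum_i\per(C_i)\leq \per(C)+2(k-1)\diam(C)$ after a routine simplification.

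\textbf{Main obstacle.} The nontrivial step is to exhibit such a line $\ell$. Although any two disjoint convex bodies $C_i,C_j$ admit a separating line, an arbitrary separator may slice through other members of the family, and a naive accounting for those cuts loses the constant on the right-hand side. I expect the pseudometric on convex curves developed in the paper to be precisely the device that closes this gap: by measuring distances between normal lines, one can either (i) establish the existence of a non-cutting separator via a continuity/sweeping argument, tracking how the pseudo-distance between $\partial C_i$ and a varying normal line changes as the line rotates, or (ii) circumvent the separator construction entirely and bound $\sum\per(C_i)-\per(C)$ directly as a telescoping sum of pseudo-distances between $\partial C$ and the $\partial C_i$, with each telescoping term bounded by $2\diam(C)$. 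The coefficient $k-1$ in the target bound is very suggestive of a tree on the $k$ bodies, so I would expect such a direct argument to assemble these $k-1$ pseudo-distance contributions along edges of a spanning structure inside $C\setminus\bigcup C_i$.
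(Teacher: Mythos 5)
Your inductive framework is arithmetically sound: if a line $\ell$ with properties (a) and (b) existed, then adding the two inductive inequalities to the identity $\per(C')+\per(C'')=\per(C)+2|\ell\cap C|\leq\per(C)+2\diam(C)$ would close the induction exactly as you say. But the step you yourself flag as the main obstacle is a genuine, and in this form fatal, gap: such a line need not exist. Take three congruent thin rectangles arranged in a pinwheel, each lying along one side of a triangle and overshooting the corner; any line that places two of them in different closed half-planes must pass through the interior of the third. This obstruction is precisely what makes the theorem hard (it is why Glazyrin and Mori\'{c} could only prove the bound in special cases), and neither Pinchasi's proof nor the present paper proceeds by finding a non-cutting separator. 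Your hope (i), that the pseudometric could be used to produce such a separator by a sweeping argument, cannot succeed because the object being sought does not exist.

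Your fallback guess (ii) is closer in spirit to the paper, but the actual mechanism differs from a spanning tree of pseudodistances between the $\partial C_i$. The paper first invokes Pinchasi's structural result (Proposition \ref{prop:pinchasi}) to enlarge the $C_i$ to a full convex partition of $C$ together with extra convex ``holes'', extends everything to a constant-width body $D$ of the same diameter, and then does the accounting at the level of partition \emph{vertices}: Lemma \ref{lem:partition} gives $\sum_i \pper_D(C_i)=\pper_D(C)+\sum_j\pdist(v_j,D)$, each vertex contributes at most $\diam(D)$ by Lemma \ref{lem:pdist_diam}, the holes return $2l\,\diam(D)$ via the key Lemma \ref{lem:key} (whose proof needs the angle-bisector variational argument and Balitskiy's lemma), and Euler's formula counts $2(k+l-1)$ vertices, whence the coefficient $2(k-1)$. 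So the $k-1$ comes from Euler's formula for the partition graph, not from a tree on the $k$ bodies. To repair your argument you would need to import Pinchasi's partition proposition and the vertex-counting scheme, at which point you would essentially be reconstructing the paper's proof rather than running your induction.
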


In this note we provide a short and (almost) self-contained proof of Theorem \ref{thm:main} using the construction of a pseudometric on convex curves. Apart from the proof, we find this pseudometric and its properties inherently interesting. For the sake of simplicity, some statements will be formulated for \textit{strictly convex curves}, that is, curves that have exactly one common point with each supporting line. In particular, all curves of constant widths are strictly convex curves (see, for instance, \cite[Theorem 3.1.1]{MMO19}). All statements are true for general convex curves as well, with minor modifications. Throughout the whole paper by convex curves we always mean closed convex curves and use the term almost interchangeably with convex bodies.

\section{Pseudometric on convex curves}

In this section we define the pseudometric on the set of convex planar curves and adjacent notions. We fix a unit vector $u_0$ in the plane and by $u_{\theta}$, $\theta\in[0,2\pi]$, define a unit vector that is obtained by rotating $u_0$ by angle $\theta$ counterclockwise. Given a convex curve $C$, for each $u_{\theta}$ there is a unique supporting line to $C$ such that $u_{\theta}$ is orthogonal to it and the direction of $u_{\theta}$ corresponds to the half-plane containing $C$. For a common point of the supporting line and $C$, we will say that $u_{\theta}$ is an inward normal vector to $C$ at this point. By a normal line $\ell_{\theta}(C)$ we denote the line through the common point of the supporting line and $C$ in the direction of $u_{\theta}$. Note that normal lines are uniquely defined for all $\theta$ when $C$ is strictly convex. For general convex curves, $\ell_{\theta}$ are uniquely defined for all but countably many values of $\theta$ (this follows, for instance, from the result in \cite{AK52}). Therefore, the integrals in the definitions below are properly defined for all pairs of convex curves.

\begin{definition}\label{def:pdist}
For two convex curves $C_1$ and $C_2$, we define
$$\pdist(C_1,C_2)=\frac 1 2\int\limits_0^{2\pi} \dist(\ell_{\theta}(C_1), \ell_{\theta}(C_2)) d\theta,$$
where $\dist$ is a standard Euclidean distance between parallel lines.
\end{definition}

\begin{remark}
It is easy to show that if instead of $\dist$ we take the signed distance between lines, then the integral in the right hand side is zero. 
\end{remark}

\begin{proposition}\label{prop:pseudometric}
The space of all convex curves equipped with the distance from Definition \ref{def:pdist} is a pseudometric space. Moreover, the distance is convex with respect to Minkowski addition, that is,
$$\pdist(tC_1+(1-t)C_2, D)\leq t\, \pdist(C_1, D) + (1-t)\, \pdist(C_2, D),$$
for all convex curves $C_1$, $C_2$, $D$ and for any $t\in[0,1]$. The equality for $t\in(0,1)$ holds only if $\ell_{\theta}(C_1)$ and $\ell_{\theta}(C_2)$ are on the same side of $\ell_{\theta}(D)$ for all $\theta$.
\end{proposition}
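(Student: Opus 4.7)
The plan is to reduce every assertion to a pointwise inequality at a fixed angle $\theta$ and then integrate. Non-negativity and symmetry of $\pdist$ follow immediately from those properties of the Euclidean distance between parallel lines. For the triangle inequality, fix three convex curves $C_1,C_2,C_3$: at each $\theta$ for which all three normal lines are defined, they are all parallel to $u_\theta$, so the triangle inequality for parallel lines in the plane gives
\[
\dist(\ell_\theta(C_1),\ell_\theta(C_3))\le \dist(\ell_\theta(C_1),\ell_\theta(C_2))+\dist(\ell_\theta(C_2),\ell_\theta(C_3)).
\]
Integrating in $\theta$ over $[0,2\pi]$ yields the triangle inequality for $\pdist$.

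For the convexity, I will parameterize $\ell_\theta(C)$ by a signed scalar coordinate. Set $v_\theta=u_{\theta+\pi/2}$ and let $p_\theta(C)$ be the common point of $C$ and its $u_\theta$-supporting line (unique for all but countably many $\theta$ by the discussion preceding Definition \ref{def:pdist}). Define $s_\theta(C)=\langle p_\theta(C),v_\theta\rangle$. Since $\ell_\theta(C)$ and $\ell_\theta(D)$ are parallel to $u_\theta$ and displaced only along $v_\theta$, one has $\dist(\ell_\theta(C),\ell_\theta(D))=|s_\theta(C)-s_\theta(D)|$. The decisive observation is that support points are additive under Minkowski sums: for a.e.\ $\theta$ one may choose $p_\theta(tC_1+(1-t)C_2)=t\,p_\theta(C_1)+(1-t)\,p_\theta(C_2)$, which makes $s_\theta$ affine in the body. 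Applying the scalar inequality $|ta+(1-t)b|\le t|a|+(1-t)|b|$ with $a=s_\theta(C_1)-s_\theta(D)$ and $b=s_\theta(C_2)-s_\theta(D)$, then integrating in $\theta$, yields the required convexity.

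For the equality case I invoke the equality condition of the same scalar inequality: for $t\in(0,1)$, $|ta+(1-t)b|=t|a|+(1-t)|b|$ iff $a$ and $b$ do not have strictly opposite signs. Hence equality in the integrated form forces $s_\theta(C_1)-s_\theta(D)$ and $s_\theta(C_2)-s_\theta(D)$ to have the same sign for a.e.\ $\theta$, which is exactly the statement that $\ell_\theta(C_1)$ and $\ell_\theta(C_2)$ lie on the same side of $\ell_\theta(D)$; the continuity of the support map where it is single-valued then promotes this to every $\theta$ at which all three normal lines are defined. The main technical obstacle I foresee is the non-strictly-convex case, where $p_\theta$ may fail to be unique on a countable (hence measure-zero) set of angles; this is handled by fixing a measurable selection of support points compatible with Minkowski addition, so that $s_\theta$ is well-defined and affine almost everywhere and the exceptional set contributes nothing to the integrals.
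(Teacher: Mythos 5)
Your proof is correct and follows essentially the same route as the paper: a pointwise triangle inequality in $\theta$ integrated over $[0,2\pi]$, the additivity of support points (equivalently, normal lines) under Minkowski combinations, the scalar convexity of $|x-a|$, and a continuity argument for the equality case. The explicit scalar coordinate $s_\theta$ and the remark on measurable selections are just a more detailed rendering of what the paper states directly.
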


\begin{proof}
The triangle inequality follows immediately from the one-dimensional triangle inequality for each $\theta$, that is, from $$\dist(\ell_{\theta}(C_1), \ell_{\theta}(C_3)) \leq \dist(\ell_{\theta}(C_1), \ell_{\theta}(C_2)) + \dist(\ell_{\theta}(C_2), \ell_{\theta}(C_3)).$$
The remaining properties of a pseudometric follow trivially from the definition.

For convexity, note that the normal line of a Minkowski sum is the Minkowski sum of normal lines of the summands:
$$\ell_{\theta}(tC_1+(1-t)C_2)=t \ell_{\theta}(C_1) + (1-t)\ell_{\theta}(C_2).$$
Now it remains to use the fact that for each $\theta$ the distance to a given line $\ell_{\theta}(D)$ is a convex function (in other terms, for a fixed real $a$, the real function $|x-a|$ is convex). If $\ell_{\theta}(C_1)$ and $\ell_{\theta}(C_2)$ are in different open half-planes defined by $\ell_{\theta}(D)$ then, due to continuity, there is an interval of $\theta$ where this holds and the inequality must be strict.
\end{proof}

A single point can be considered a degenerate strictly convex curve. Definition \ref{def:pdist} and Proposition \ref{prop:pseudometric} work in this case just the same. Moreover, for two points $v_1$ and $v_2$, $\pdist(v_1,v_2)$ coincides with the regular Euclidean distance multiplied by $2$.

The distance from Definition \ref{def:pdist} does not define a metric because there are curves that are different but have the same normal line bundles, for example, concentric circles. Similarly, we can generalize a perimeter of a convex curve. 

\begin{definition}\label{def:pper}
For convex curves $C$ and $D$,
$$\pper_D(C)=\frac 1 2\int\limits_0^{2\pi} (\dist(\ell_{\theta}^r(C), \ell_{\theta}(D))+ \dist(\ell_{\theta}^l(C), \ell_{\theta}(D)) )d\theta,$$
where $\ell_{\theta}^{r}(C)$ and $\ell_{\theta}^l(C)$ are supporting lines to $C$ parallel to $\ell_{\theta}(D)$.
\end{definition}

Note that the standard perimeter (curve length) $\per(C)$ can be obtained by a similar formula with $\dist(\ell_{\theta}^r(C),\ell_{\theta}^l(C))$ under the integral. Therefore, the triangle inequality implies that $\pper_D(C)\geq \per(C)$ and the equality holds if and only if each normal line of $D$ intersects $C$. This happens, for instance, when $D$ is a disk containing $C$ and the center of $D$ is inside $C$ so $\pper$ is indeed a generalization of the standard Euclidean perimeter.

It immediately follows from the definitions that for any point $v$ and any curve $D$, $$\pper_D(v) = 2 \pdist(v,D).$$

For curves of constant width, the definitions above become even more convenient. In particular, $\ell_{\theta}=\ell_{\theta+\pi}$ for each $\theta\in[0,\pi]$ so we can substitute $\frac 1 2 \int_0^{2\pi}$ by $\int_{0}^{\pi}$ both in Definition~\ref{def:pdist} and Definition \ref{def:pper}.

The following lemma essentially extends the definition of a circle to all curves of constant widths using the distance from Definition \ref{def:pdist}.

\begin{lemma}\label{lem:pdist_diam}
For a constant width curve $D$ and a point $v$, $\pdist(v,D)=\diam(D)$ for $v\in D$, $\pdist(v,D)< \diam(D)$ for $v$ inside $D$, and $\pdist(v,D)> \diam(D)$ for $v$ outside~$D$.
\end{lemma}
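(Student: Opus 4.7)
The plan is to rewrite $\pdist(v, D)$ as a one-dimensional total variation and then handle the three cases separately. For a point $v$ and an angle $\theta$, set $r(\theta) = \langle v - p_\theta, u_\theta\rangle$, the signed inward-positive distance from $v$ to the supporting line of $D$ with inward normal $u_\theta$. Since $p'_\theta$ is tangent to $D$ at $p_\theta$ and hence perpendicular to $u_\theta$, differentiating gives $r'(\theta) = \langle v - p_\theta, u_\theta^\perp\rangle$, which is exactly the signed distance from $v$ to the normal line $\ell_\theta(D)$. Hence $\pdist(v,D) = \tfrac12 \int_0^{2\pi} |r'(\theta)|\, d\theta$. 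The constant width hypothesis $p_{\theta+\pi} = p_\theta + w u_\theta$ (with $w = \diam(D)$) yields $r(\theta+\pi) = w - r(\theta)$, so $|r'|$ is $\pi$-periodic and $\pdist(v,D) = \mathrm{TV}_{[0,\pi]}(r)$.

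For $v = p_{\theta_0} \in \partial D$, I will show that $r$ is monotonically non-decreasing on $[\theta_0, \theta_0+\pi]$, which yields $\mathrm{TV}_{[0,\pi]}(r) = r(\theta_0+\pi) - r(\theta_0) = w - 0 = w$. Monotonicity rests on the observation that $r'(\theta) = 0$ is equivalent to $v$ lying on the normal line $\ell_\theta(D)$; but for a strictly convex $D$ that line meets $\partial D$ exactly at $p_\theta$ and $p_{\theta+\pi}$, so a boundary point $v$ on $\ell_\theta(D)$ forces $\theta \equiv \theta_0 \pmod{\pi}$. Combined with $0 \leq r \leq w$ everywhere (using $v \in D$ and $r(\theta)+r(\theta+\pi) = w$), this gives the desired monotonicity and hence $\pdist(v,D) = w = \diam(D)$.

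For $v$ strictly inside $D$, I pick a chord $v_1 v_2$ through $v$ with endpoints $v_1, v_2 \in \partial D$ not antipodal, write $v = tv_1 + (1-t)v_2$, and apply the convexity of $\pdist(\cdot, D)$ from Proposition~\ref{prop:pseudometric} together with the boundary case to get $\pdist(v,D) \leq w$. To upgrade to strict inequality, I invoke the equality clause of Proposition~\ref{prop:pseudometric}: at $\theta_1$ (with $v_1 = p_{\theta_1}$), $\ell_{\theta_1}(v_1)$ coincides with $\ell_{\theta_1}(D)$ while $v_2$ is strictly off this line, and an infinitesimal perturbation of $\theta$ moves $v_1$ to the opposite side, so for some nearby $\theta$ the lines $\ell_\theta(v_1)$ and $\ell_\theta(v_2)$ lie on opposite sides of $\ell_\theta(D)$, violating the equality condition. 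For $v$ outside $D$, a supporting line of $D$ strictly separates $v$ from $D$, so $r(\theta_1) < 0$ for some $\theta_1$, and therefore $r(\theta_1+\pi) = w - r(\theta_1) > w$; hence $\max r - \min r > w$, and since the total variation of a continuous periodic function over one period is at least twice its oscillation, $\mathrm{TV}_{[0,2\pi]}(r) > 2w$, giving $\pdist(v,D) > w$.

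The main difficulty will be establishing the monotonicity of $r$ on $[\theta_0, \theta_0 + \pi]$ in the boundary case; once it is in place the other two cases follow quickly. Minor care is also needed at vertices of $D$ (where $p_\theta$ is constant on an interval of $\theta$): there $r$ acquires flat stretches instead of genuine interior critical values, so monotonicity is preserved, and the boundary identity $\pdist(v,D) = w$ persists at vertices of $\partial D$ as well.
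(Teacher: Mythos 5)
Your proposal is correct in substance; it shares the paper's core identity but treats two of the three cases by genuinely different means. The reduction $\pdist(v,D)=\mathrm{TV}_{[0,\pi]}(r)$ via $r'(\theta)=\langle v-p_\theta,u_\theta^{\perp}\rangle$ is exactly the paper's Lemma~\ref{lem:signed} recast in support-function language, and your boundary case (monotonicity of $r$ over a half-period) is the same computation the paper does, except that you actually justify the paper's assertion that $v$ lies on one side of all $\ell_\theta$ by noting that $\ell_\theta(D)$ meets $\partial D$ only at $p_\theta$ and $p_{\theta+\pi}$. For the interior case the paper takes the \emph{antipodal} chord through $v$ (via the nearest boundary point) and exhibits one normal line strictly separating its endpoints, whereas you take a non-antipodal chord and find an interval of $\theta$ near $\theta_1$ where $v_1$ and $v_2$ lie on opposite sides of $\ell_\theta(D)$; both routes feed into the strictness clause of Proposition~\ref{prop:pseudometric}. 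For the exterior case your oscillation bound $\mathrm{TV}_{[0,2\pi]}(r)\geq 2(\max r-\min r)>2w$ is genuinely different from, and more self-contained than, the paper's argument, which extrapolates convexity along a segment through an interior point and therefore needs the interior case as input. One small gap to patch: a non-antipodal chord through an interior $v$ need not exist --- if $D$ is a disk centered at $v$, every chord through $v$ is a diameter. Either dispose of that configuration separately (there $r'\equiv 0$, so $\pdist(v,D)=0<w$), or observe that otherwise $r'_v\not\equiv 0$, which supplies a direction $u_\theta$ with $v\notin\ell_\theta(D)$ and hence a non-antipodal chord through $v$ in that direction.
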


In order to prove this lemma, we will need the following result about convex curves. Assume $\gamma$ is a strictly convex curve parametrized by inward normal vectors, that is, $\gamma(\theta)$ is a point on the curve, where the inward normal vector is $u_{\theta}$ ($\gamma$ is not injective for singular points of the curve).
We do not need a precise form of this parametrization but the mere fact that $\gamma(\theta)$ is almost everywhere differentiable (this follows from \cite[Theorem 5.1.1]{MMO19} or from the theorem of Aleksandrov \cite{A39}). For each $\theta$, we define a unit tangent vector $v_{\theta}$ by rotating $u_{\theta}$ by $\pi/2$ clockwise.

\begin{lemma}\label{lem:signed}
For all strictly convex curves $\gamma$ and for any pair of $\theta_1$, $\theta_2$ such that $0\leq\theta_1\leq\theta_2\leq 2\pi$,
$$\int\limits_{\theta_1}^{\theta_2} \gamma(\theta) \cdot v_{\theta}\, d\theta = \gamma(\theta_1)\cdot u_{\theta_1} - \gamma(\theta_2) \cdot u_{\theta_2},$$
where $\cdot$ is the standard dot product in the plane.
\end{lemma}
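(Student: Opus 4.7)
The plan is to recognize the integrand as the derivative of a familiar object and apply the fundamental theorem of calculus. With $u_0$ the fixed reference direction so that $u_\theta=(\cos\theta,\sin\theta)$, the clockwise rotation is $v_\theta=(\sin\theta,-\cos\theta)$, and a direct calculation gives $\frac{d}{d\theta} u_\theta=-v_\theta$. I would then define $h(\theta)=\gamma(\theta)\cdot u_\theta$; since $\gamma(\theta)$ lies on the supporting line to the curve with inward normal $u_\theta$, the function $h$ is precisely the (signed) support function of the convex body bounded by $\gamma$.

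The key computation is that, wherever $\gamma$ is differentiable,
$$h'(\theta)=\gamma'(\theta)\cdot u_\theta+\gamma(\theta)\cdot u_\theta'=\gamma'(\theta)\cdot u_\theta-\gamma(\theta)\cdot v_\theta.$$
The first term on the right vanishes: because $\gamma$ is parametrized by inward normal direction, the derivative $\gamma'(\theta)$ is necessarily tangent to the curve at $\gamma(\theta)$ and therefore orthogonal to $u_\theta$. Hence $h'(\theta)=-\gamma(\theta)\cdot v_\theta$ at almost every $\theta$, and integrating from $\theta_1$ to $\theta_2$ will yield
$$h(\theta_2)-h(\theta_1)=-\int_{\theta_1}^{\theta_2}\gamma(\theta)\cdot v_\theta\,d\theta,$$
which is exactly the claimed identity after rearrangement.

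The main obstacle is a regularity issue: $\gamma$ is only almost everywhere differentiable, so the fundamental theorem of calculus cannot be invoked for $\gamma$ directly (almost everywhere differentiability alone does not recover a function from the integral of its derivative). The resolution is that we do not need absolute continuity of $\gamma$ itself, only of the scalar function $h$. This is standard: the support function of a bounded convex body is Lipschitz (with constant controlled by the diameter), hence absolutely continuous. With that noted, the fundamental theorem of calculus applies to $h$ and closes the argument.
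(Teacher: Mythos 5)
Your proof is correct and takes essentially the same route as the paper's: differentiate $h(\theta)=\gamma(\theta)\cdot u_\theta$, use $\dot u_\theta=-v_\theta$ together with $\dot\gamma(\theta)\perp u_\theta$, and integrate. Your extra observation that the fundamental theorem of calculus requires absolute continuity of $h$ --- which holds because the support function of a bounded convex body is Lipschitz --- is a regularity point the paper leaves implicit.
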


\begin{proof}
Differentiating $\gamma(\theta)\cdot u_{\theta}$ we get $\dot{\gamma}\cdot u_{\theta} + \gamma\cdot (-v_{\theta}) = - \gamma\cdot v_{\theta}$ for almost all $\theta$ so the integral at hand is just the difference of values of $\gamma\cdot v_{\theta}$ at $\theta_1$ and $\theta_2$.
\end{proof}

\begin{remark}
This lemma may be extended to general convex curves by taking into account line segments in $D$ or carefully approximating $D$ with strictly convex curves.
\end{remark}

We also note that all curves of constant width are naturally characterized by their parametrizations using the support function \cite{HS53I}, \cite{HS53II}, \cite[Theorem 5.3.5]{MMO19}.

The dot product $\gamma(\theta)\cdot v_{\theta}$ is precisely the distance from the origin 0 to the normal line $\ell_{\theta}$ taken with a sign. The integral calculated in Lemma \ref{lem:signed} can be used to calculate a part of $\pdist(0,\gamma)$ as $|\gamma(\theta_1)\cdot u_{\theta_1} - \gamma(\theta_2) \cdot u_{\theta_2}|$  when the origin lies on the same side of all normal lines $\ell_{\theta}$ for all $\theta\in[\theta_1,\theta_2]$. This is exactly what we are going to do in the proof of Lemma \ref{lem:pdist_diam}.

\begin{proof}[Proof of Lemma \ref{lem:pdist_diam}]
Let $v\in D$ and assume that $u_{\theta_1}$ is the inward normal vector to $D$ at $v$. Without loss of generality, $v=0$ and $\theta_1=0$. Parametrizing the curve as above, $v=\gamma(0)$. Let $v'=\gamma(\pi)$. Note that $v'$ is diametrically opposite to $v$ on $D$.

Clearly, $v$ lies on the same side of all $\ell_{\theta}$ for $\theta\in(0, \pi)$ ($v$ may belong to some of these lines if it is a singular point of the curve). By Lemma \ref{lem:signed}, 
$$\pdist(v,D)=\int\limits_0^{\pi} \dist(v,\ell_\theta)\, d\theta = |\gamma(0)\cdot u_0 - \gamma(\pi) \cdot u_{\pi}| = |\gamma(\pi)| = \diam(D).$$

Let $v$ be inside $D$. Assume $v_1$ is the point in $D$ closest to $v$. Then the line $v_1v$ is a normal line of $D$. Assume it intersects $D$ at the second point $v_2$. $v_1$ and $v_2$ are diametrically opposite points of $D$. There is a normal line $\ell_{\theta}(D)$ such that $v_1$ and $v_2$ are in different open half-planes with respect to this line (for example, when $u_{\theta}$ is orthogonal to the line $v_1v_2$). By the convexity part of Proposition \ref{prop:pseudometric}, $\pdist(v,D)$ is strictly smaller than $t\, \pdist(v_1,D)+(1-t)\, \pdist(v_2,D)$ for a certain $t\in (0,1)$. Since $\pdist(v_1,D)=\pdist(v_2,D)=\diam(D)$, $\pdist(v,D)<\diam(D)$.

Let $v$ be outside $D$. Connect it to any point $v_1$ inside $D$. Assume the line segment $vv_1$ intersects $D$ at $v_2$. Then $\pdist(v_2,D)=\diam(D)$ and $\pdist(v_1,D)<\diam(D)$. By convexity, $\pdist(v,D)>\diam(D)$.
\end{proof}

In the last lemma of this section, we prove the connection between perimeters of convex bodies in a partition and distances from vertices of this partition.

\begin{lemma}\label{lem:partition}
If a convex body with boundary $C$ is partitioned into convex bodies with boundaries $C_i$ with all partition vertices $v_j$ of degree 3, then for any convex curve $D$,
$$\sum_i \pper_D(C_i) = \pper_D(C)+\sum_j \pdist(v_j, D).$$
\end{lemma}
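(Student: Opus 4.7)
The plan is to establish the identity at the integrand level for each direction $\theta$ and then integrate. Unfolding Definitions~\ref{def:pdist} and~\ref{def:pper} and multiplying through by $2$, the claimed equality reduces to the pointwise statement
\begin{align*}
&\sum_i \bigl(\dist(\ell_\theta^r(C_i), \ell_\theta(D)) + \dist(\ell_\theta^l(C_i), \ell_\theta(D))\bigr)\\
&\qquad = \dist(\ell_\theta^r(C), \ell_\theta(D)) + \dist(\ell_\theta^l(C), \ell_\theta(D)) + \sum_j \dist(v_j, \ell_\theta(D))
\end{align*}
for almost every $\theta$. Fixing such a $\theta$ and using that the lines $\ell_\theta^r(K)$, $\ell_\theta^l(K)$, and $\ell_\theta(D)$ all share direction $u_\theta$, each distance above becomes the absolute value of the difference of the linear functional $p \mapsto p \cdot v_\theta$ (in the notation of Lemma~\ref{lem:signed}) evaluated at appropriate points. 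Letting $[\alpha_i,\beta_i]$, $[\alpha,\beta]$, and $\mu_j$ denote the images of $C_i$, $C$, and $v_j$ under this functional, and letting $c$ denote its value on $\ell_\theta(D)$, the identity becomes
$$\sum_i \bigl(|\beta_i - c| + |\alpha_i - c|\bigr) = |\beta - c| + |\alpha - c| + \sum_j |\mu_j - c|.$$

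Rather than verifying this for the specific $c$ arising from $D$, I would prove the stronger multiset equality
$$\{\alpha_i\}_{i=1}^N \sqcup \{\beta_i\}_{i=1}^N \ =\ \{\alpha,\beta\} \sqcup \{\mu_j\}_{j=1}^V,$$
from which the identity for \emph{every} $c$ follows, since both sides are sums of $|x-c|$ over equal multisets. For generic $\theta$ (no partition edge parallel to $u_\theta$), each $\alpha_i$ or $\beta_i$ is realized at a point that is either an extremum of $p\cdot v_\theta$ on $C$ (giving $\alpha$ or $\beta$) or a partition vertex $v_j$. The multiset equality then reduces to the local assertion that each partition vertex $v_j$ is an extremum of $p\cdot v_\theta$ on exactly one of the $C_i$'s meeting at it.

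The main obstacle is verifying this local count at each degree-$3$ vertex. At an interior vertex the three sectors have angular extent strictly less than $\pi$ by convexity of the $C_i$'s, which forces the three edges to straddle any line through $v_j$; a short case analysis then shows that exactly one sector is contained in a closed half-plane bounded by the line through $v_j$ in direction $u_\theta$, and this is precisely the condition for $v_j$ to be an extremum of $p\cdot v_\theta$ on the corresponding piece. Vertices on $\partial C$ require an analogous, slightly more delicate check; the subtle case arises when $v_j$ is a corner of $\partial C$ and $v_\theta$ lies in the normal cone of $C$ at $v_j$, so that $v_j$ is realized as an extremum by \emph{several} incident pieces simultaneously --- but then $\mu_j$ also coincides with $\alpha$ or $\beta$, absorbing the extra multiplicity on the right-hand side. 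Euler's formula applied to the partition graph, combined with the degree-$3$ assumption, yields $V = 2N - 2$, confirming that both multisets have total cardinality $2N$. Finally, the non-generic $\theta$'s form a measure-zero set (finitely many directions parallel to a partition edge), so integrating the verified identity over $\theta \in [0,2\pi]$ and dividing by $2$ yields the lemma.
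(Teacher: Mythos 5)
Your proposal is correct and follows essentially the same route as the paper: reduce to a per-$\theta$ identity and verify it by a local count at each degree-$3$ vertex, distinguishing the case where the line through $v_j$ in direction $u_\theta$ splits the incident edges (one incident piece gets $v_j$ as an extremum) from the case where all edges lie on one side (forcing $v_j\in\partial C$, with the doubled contribution on the left matched by $\pper_D(C)$ plus $\pdist(v_j,D)$ on the right). Your multiset formulation of the touch points is just a cleaner packaging of the paper's ``which lines contribute to which side'' bookkeeping.
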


\begin{proof}
There are countably many values of $\theta$ such that $C$ or one of $C_i$ has a supporting line parallel to $u_{\theta}$ with more than one common point with it. It is, therefore, sufficient to check that for all other $\theta$, the integrand is the same in both sides of the suggested equality. Integrands in both sides contain distances from lines parallel to $u_{\theta}$ to $\ell_{\theta}(D)$. We just need to carefully check that the lines are the same in the right and the left hand sides.

First, we take into account lines that do not go through vertices of the partition. They show up as supporting lines of $C$ in the right hand side and then they show up in the left hand side too, as supporting lines of $C_i$.

All the remaining lines are going through vertices of the partition. There are two possible scenarios: all partition edges from a vertex $v_j$ are on one side with respect to a line parallel to $u_{\theta}$ or three edges are split into two non-empty groups. In the former case, $v_j$ is necessarily in $C$ and the line through $v_j$ contributes to the left hand side twice, in two $\pper(C_i)$, and to the right hand side twice, in $\pper(C)$ and one $\pdist(v_j)$. In the latter case, the line through $v_j$ contributes only to one of $\pper(C_i)$ and to one $\pdist(v_j)$.
\end{proof}

\section{Total perimeter of convex bodies}

In this section, we provide a short proof of Theorem \ref{thm:main}. For the first step of the proof, we use the structural result from the paper of Pinchasi.

\begin{proposition}\label{prop:pinchasi}\cite{P17}
For disjoint convex bodies $C_1$, $C_2$,$\ldots$, $C_k$ inside the convex body $D$, there exist a partition of $D$ into convex bodies $C_1'$, $C_2'$, $\ldots$, $C_{k+l}'$, $l\geq 0$, such that $C_i\subseteq C_i'$ for all $1\leq i\leq k$, and all $C_j'$, $k+1\leq j\leq k+l$, have no common points with each other and with the boundary of $D$.
\end{proposition}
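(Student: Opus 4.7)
The plan is to argue by induction on $k$, with the trivial base $k=1$ given by $C_1'=D$ and $l=0$. For the inductive step, the goal is to produce a single clean split of $D$ that reduces the problem to smaller instances enjoying the same structural properties, which can then be reassembled.

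The easy case is when there exists a line $\ell$ that misses every $C_i$ and separates the family $\{C_1,\ldots,C_k\}$ into two nonempty groups. To find such an $\ell$, I would start from a separating line of some pair of $C_i$'s (say, the pair realizing the minimum interbody distance) and then slide or rotate $\ell$ while keeping it strictly between the chosen pair, until either it misses every other $C_j$ as well, or every admissible position is obstructed by some $C_j$. In the former situation I extend $\ell$ to a chord of $D$, cut $D$ into two convex subregions, and apply the inductive hypothesis to each. The fillers produced on the two sides lie in the respective convex subregions and, by induction, are introduced away from the boundaries of those subregions; hence they remain disjoint from each other and from $\partial D$.

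The main obstacle is the limiting case where no such line exists, the prototypical example being three symmetric disks circling a small central gap, where every line separating two of them crosses the third. I would handle this by introducing a convex filler $F$ in the interior of $D$, formed as the intersection of $D$ with finitely many half-planes, one per body in a minimal surrounding subfamily of the $C_i$'s, each half-plane chosen so that it strictly excludes its corresponding body. By construction $F$ is convex, and since the surrounding bodies lie strictly inside $D$, the filler is bounded away from $\partial D$. Removing the interior of $F$ splits the rest of $D$ into convex pieces, one adjacent to each surrounding body along a straight edge of $F$, each piece containing a strictly smaller subfamily of the original bodies. Applying the inductive hypothesis to each of these pieces completes the construction.

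The principal technical hurdle is the rigorous construction of a suitable surrounding subfamily and the associated convex filler in the limiting case. The natural plan is to work with common tangent lines between pairs of $C_i$'s and define $F$ via the innermost enclosing arrangement, which is essentially the route taken in Pinchasi's original argument. A secondary point to verify is that the recursion preserves the inductive invariant that every filler avoids the boundary of the current subregion: this holds because each recursive call is performed inside a convex subregion whose boundary consists of arcs of $\partial D$ together with straight edges inherited from previously introduced fillers, so by induction the newly produced fillers lie strictly in the interior of that subregion and therefore touch neither earlier fillers nor $\partial D$.
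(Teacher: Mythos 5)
The paper does not actually prove this proposition: it is imported verbatim from Pinchasi \cite{P17} as a black box, so there is no in-paper argument to compare yours against, and your sketch has to stand on its own. It does not. The easy case (a chord of $D$ missing every $C_i$ and separating the family, followed by induction on the two sides) is fine modulo routine bookkeeping, but everything difficult is concentrated in the other case, and there your proposal asserts rather than proves the three claims that constitute the actual theorem. First, the dichotomy itself --- either a separating chord exists or there is a ``minimal surrounding subfamily'' bounding a genuine hole --- is not established; with four or more bodies the obstruction pattern (which body blocks which candidate separating line) can be complicated, and extracting a cyclic surrounding structure from it is a real argument, not an observation. Second, the filler $F$, defined as $D$ intersected with one half-plane per surrounding body, need not be nonempty, and your reason for it avoiding $\partial D$ (``since the surrounding bodies lie strictly inside $D$'') is both unjustified as an inference (an intersection of half-planes each excluding one body can easily reach $\partial D$ unless the half-planes genuinely enclose a bounded region) and based on a hypothesis you are not given (the $C_i$ may touch $\partial D$).

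Third, and most seriously, the statement that removing $\mathrm{int}(F)$ ``splits the rest of $D$ into convex pieces, one adjacent to each surrounding body along a straight edge of $F$, each piece containing a strictly smaller subfamily'' is exactly the hard part of the proposition in disguise. The set $D\setminus\mathrm{int}(F)$ is an annulus-shaped region; to partition it into convex pieces you must draw cut segments from the corners of $F$ out to $\partial D$, and these cuts must simultaneously (i) keep each resulting piece convex and (ii) avoid every one of the remaining bodies, including the non-surrounding ones, any of which may straddle every candidate cut. Nothing in your construction addresses this, and your own closing paragraph concedes the point by deferring ``the rigorous construction of a suitable surrounding subfamily and the associated convex filler'' to Pinchasi's original argument. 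A proof that outsources its central step to the paper it is meant to reprove is a restatement of the problem, not a solution. If you want to prove this proposition yourself, the place to start is a precise combinatorial statement of the surrounding/separating dichotomy and an explicit rule for the cut segments (e.g.\ via common tangents of consecutive surrounding bodies) together with a verification that no other body meets them.
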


Sets $C_j'$, $k+1\leq j\leq k+l$, are called holes of the partition. Note that $\sum_1^k \per(C_i')\geq \sum_1^k \per(C_i)$ so it is sufficient to prove the bound of Theorem \ref{thm:main} for the extended case only.

For the next step, we explain how to extend this partition to a partition of a constant width body $D'$, $D\subseteq D'$. We will need the following lemma.

\begin{lemma}\label{lem:AK}\cite[Lemma 4]{AK12}
Any finite convex partition of a convex body in $\mathbb{R}^2$ can be extended to a convex partition of $\mathbb{R}^2$.
\end{lemma}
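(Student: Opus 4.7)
The plan is to prolong each internal partition edge of $K$ beyond its boundary endpoints as a collinear ray, and show that the resulting arrangement of edges and rays in $\mathbb{R}^2$ is the desired convex partition.

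First, I would fix the setup. Let the partition of $K$ consist of convex cells $K_1, \ldots, K_m$. Classify each vertex of the partition as interior (in $\mathrm{int}(K)$) or boundary (on $\partial K$). For each partition edge $e$ and each endpoint $v \in \partial K$ of $e$, add a ray $r_{v,e}$ starting at $v$, lying in $\mathbb{R}^2 \setminus \mathrm{int}(K)$, and collinear with $e$. Let $\mathcal{A}$ denote the arrangement consisting of all original partition edges together with all such rays. I then need to verify three things: (i) inside $\mathrm{int}(K)$, $\mathcal{A}$ coincides with the original interior edges, so the cells of $\mathcal{A}$ restricted to $\mathrm{int}(K)$ recover $K_1, \ldots, K_m$; (ii) the cells of $\mathcal{A}$ have pairwise disjoint interiors and cover $\mathbb{R}^2$, as for any finite planar arrangement of straight objects; (iii) every cell of $\mathcal{A}$ is convex.

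The convexity claim (iii) is the main work, and I would establish it by a case analysis at every vertex of every cell. At an interior partition vertex, the angle inside each incident cell is less than $\pi$ by the given convexity of the original partition. At a boundary partition vertex $v$, the original edge inside $K$ and the ray $r_{v,e}$ outside $K$ are collinear by construction, so the cell boundary passes through $v$ as a straight line with angle exactly $\pi$. At any new vertex $w$ created by the mutual intersection of two or more extension rays outside $K$, each ray locally lies on a full line through $w$, so the local picture is that of a planar line arrangement whose cells are convex sectors.

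The main obstacle is precisely this last case, the mutual crossing of extension rays outside $K$. When two rays $r_{v_1,e_1}$ and $r_{v_2,e_2}$ meet at a point $w \in \mathbb{R}^2 \setminus K$, the cell extending some original $K_i$ may get truncated there, and additional cells—not containing any $K_i$—may appear beyond $w$ in $\mathbb{R}^2 \setminus K$. One must check that all of these cells, both the extensions of the original $K_i$ and the newly produced ones, remain convex; this reduces to the observation that finitely many straight lines meeting at a point cut a neighbourhood of that point into convex wedges. Away from vertices the boundary of every cell of $\mathcal{A}$ is a straight segment or ray, so the vertex analysis above is enough to conclude global convexity of each cell, completing the extension of the original partition to a convex partition of $\mathbb{R}^2$.
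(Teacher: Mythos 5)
The paper does not prove this lemma; it is quoted from Akopyan--Karasev \cite[Lemma 4]{AK12}, so there is no internal proof to compare against. Judged on its own, your construction has a genuine gap that you yourself brush against and then wave away: the extra cells. When two extension rays cross outside $K$, the arrangement acquires cells that contain no $K_i$ (e.g.\ take $K=[0,10]^2$ cut into three strips by the chords from $(0,3)$ to $(10,4)$ and from $(0,7)$ to $(10,6)$; their upward extensions cross at $(5,20)$ and create a fourth, ``orphan'' wedge). So what you build is a convex partition of $\mathbb{R}^2$ in which each $K_i$ is \emph{contained} in some part, but not an extension in the sense the lemma is stated and used: in this paper the extended partition must consist of exactly the parts $\tilde C_1,\dots,\tilde C_{k+l}$, one per original part, since the subsequent Euler-formula count of vertices depends on that number, and the crossings can occur arbitrarily close to $K$, so the orphan cells cannot be assumed to miss $D'$. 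To repair this you would have to merge each orphan cell into a neighbour, and that is not automatic: the union of an orphan wedge with an adjacent part need not be convex (in the example above the orphan wedge and the middle strip's part are vertically opposite sectors at $(5,20)$), so you would need an argument that a convexity-preserving assignment of orphans to neighbours always exists. Nothing in your write-up addresses this.

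A secondary, more repairable weakness is the convexity argument itself. You check only that each vertex of the arrangement locally splits the plane into sectors of angle less than $\pi$; to conclude that a cell is convex you also need that the cell approaches each vertex from a single sector and that its boundary is a simple polygonal (or bi-infinite polygonal) curve. This is where a construction like yours could in principle fail --- a cell of an arrangement of segments and rays can wrap around a dangling end or a bounded component and meet a vertex from two sides, with a reflex angle. Here it happens to be safe because every edge of your arrangement continues, at each endpoint, either into further edges of the partition or into a ray to infinity, so the arrangement has no dead ends and no bounded components; but that is exactly the kind of statement that must be proved rather than asserted, and it is the content hiding behind your phrase ``the vertex analysis above is enough to conclude global convexity.''
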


We extend the partition of $D$ to the partition $\tilde{C}_1$, $\ldots$, $\tilde{C}_{k+l}$ of the plane by Lemma \ref{lem:AK} and use it to get a partition of $D'$ into $C_1''=\tilde{C}_1\cap D'$, $\ldots$, $C_{k+l}''=\tilde{C}_{k+l}\cap D'$. Note that this extension does not change any convex parts that were strictly inside $D$ including all holes. Let $l_1$, $l_2$, $\ldots$, $l_n$ be the lengths of new line segments added to the extended parts of the partition (see Figure \ref{fig:extension}). Then
$$\sum\limits_{i=1}^k \per(C_i') - \per(D) = \sum\limits_{i=1}^k \per(C_i'') - \per(D') - 2\sum\limits_{j=1}^n l_j.$$
If $D'$ has the same diameter as $D$, then it is sufficient to prove the bound for $D'$. Any convex body can be extended to a body of constant width \cite[Theorem 54]{E58} so it is sufficient for us to prove the bound of Theorem \ref{thm:main} for the case when the large convex body has constant width.

\begin{figure}[h]
\includegraphics[height=6cm]{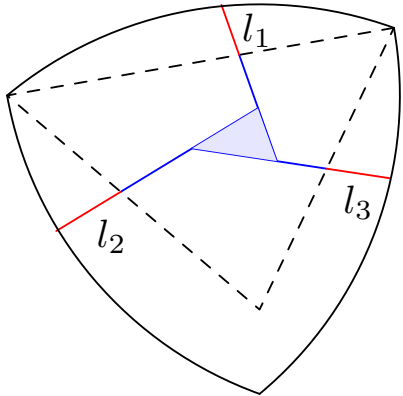}
\caption{Extending a partition of a triangle to the Reuleaux triangle of the same diameter}
\label{fig:extension}
\end{figure}

Boundaries of the partition sets form a plane graph. By adding vertices and introducing degenerate edges and faces we can assume that all vertices in this graph have degree 3. In the first scenario, assume there is a vertex $p$ of the plane graph surrounded by non-holes and connected to vertices $p_1$, $\ldots$, $p_r$, consecutively. Then we can introduce a degenerate hole $p_1'\ldots p_r'$ (all vertices of the hole geometrically coincide with $p$ and are distinguishable as planar graph vertices only) with degenerate edges $p_1'p_2'$, $\ldots$, $p_r'p_1'$ so that $p_1p_1'$, $\ldots$, $p_rp_r'$ are also edges of the plane graph. In the second scenario, assume there is a vertex $p$ of degree at least 4 on the boundary of the body $D$ or $p$ is a vertex of one of the holes. Let $p$ be connected to $p_1$, $\ldots$, $p_r$, consecutively, with $pp_1$ and $pp_r$ on the boundary of $D$/hole. Then we can add vertices $p_1',\ldots, p_r'$ (again geometrically coinciding with $p$) with degenerate edges $p_1'p_2'$, $\ldots$, $p_{r-1}'p_r'$ lying consecutively on the boundary of $D$/hole so that $p_1p_1'$, $\ldots$, $p_rp_r'$ are also edges of the plane graph.

After all reductions, Theorem \ref{thm:main} follows from the following theorem.

\begin{theorem}\label{thm:mainp}
A constant width body $D$ is partitioned into convex bodies $C_1$, $\ldots$, $C_{k}$, $H_1$, $\ldots$, $H_l$ such that all $H_j$, $1\leq j\leq l$, are pairwise disjoint and have no common points with the boundary of $D$ and all vertices of the graph defined by the partition have degree 3. Then
$$\sum\limits_{i=1}^k \pper(C_i)\leq \per(D)+2(k-1) \diam(D).$$
\end{theorem}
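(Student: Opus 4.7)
The plan is to apply Lemma~\ref{lem:partition} directly to the partition $\{C_1,\ldots,C_k,H_1,\ldots,H_l\}$ of $D$ with $D$ itself as the reference curve. Every normal line $\ell_\theta(D)$ passes through $\gamma_D(\theta)\in\partial D$ and hence lies in the strip between the two supporting lines of $D$ parallel to $u_\theta$; by the remark after Definition~\ref{def:pper} this gives $\pper(D)=\per(D)$, and the lemma produces the master identity
\[
\sum_{i=1}^k \pper(C_i) + \sum_{j=1}^l \pper(H_j) \;=\; \per(D) + \sum_v \pdist(v,D).
\]
Because every vertex of the partition graph has degree $3$ and the graph has $k+l+1$ faces (including the unbounded one), Euler's formula gives $|V|=2(k+l-1)$. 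Lemma~\ref{lem:pdist_diam}, applied to the constant-width body $D$, gives $\pdist(v,D)\le\diam(D)$ for every partition vertex.

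The theorem is therefore equivalent to the estimate $\sum_v \pdist(v,D)-\sum_j\pper(H_j)\le 2(k-1)\diam(D)$, and I would obtain it from the per-hole inequality
\[
\pper(H_j) + \sum_{i=1}^{n_j}\bigl(\diam(D)-\pdist(v_i^j,D)\bigr) \;\ge\; 2\diam(D),
\]
where $v_1^j,\ldots,v_{n_j}^j$ are the graph vertices on $\partial H_j$. Summing over~$j$, noting that each hole vertex belongs to a single hole and that every remaining partition vertex only contributes further nonnegative slack $\diam(D)-\pdist(v,D)$, produces $\sum_j\pper(H_j)\ge 2l\diam(D)-\sum_v(\diam(D)-\pdist(v,D))$, which rearranges into the required bound via $|V|\diam(D)-2l\diam(D)=2(k-1)\diam(D)$. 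The degenerate case of a hole collapsed to a point $p$ is immediate: $\pper(H_j)=2\pdist(p,D)$, so the inequality reduces to $(n_j-2)(\diam(D)-\pdist(p,D))\ge 0$, which is Lemma~\ref{lem:pdist_diam}.

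The main obstacle is proving the per-hole inequality for a genuine polygonal hole. Splitting $[0,2\pi]$ according to which vertex of $H_j$ attains each of the two supporting lines parallel to~$u_\theta$, the contribution of the extremal vertices in each direction exactly matches $\pper(H_j)$, so the difference $\sum_i\pdist(v_i^j,D)-\pper(H_j)$ collapses to $\tfrac{1}{2}\int_0^{2\pi}\sum_{i:\,v_i^j\text{ non-extremal}}\dist(v_i^j,\ell_\theta(D))\,d\theta$, and the goal reduces to bounding this integral by $(n_j-2)\diam(D)$. I would try to establish it by triangulating $H_j$ from an interior point (applying the degree-$3$ reduction from the preamble at the new vertex and at each~$v_i^j$) and reapplying Lemma~\ref{lem:partition} internally, collapsing the claim to the triangular case $n_j=3$; that case I would then try to verify by combining Lemma~\ref{lem:signed} with the Minkowski convexity from Proposition~\ref{prop:pseudometric}, using that each vertex of $H_j$ lies inside~$D$ so that $\pdist(v_i^j,D)\le\diam(D)$ and that the exterior angles of $H_j$ sum to~$2\pi$.
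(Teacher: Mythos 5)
Your outer argument is exactly the paper's proof of this theorem: Lemma~\ref{lem:partition} with $D$ as the reference curve (and $\pper_D(D)=\per(D)$), Euler's formula giving $2(k+l-1)$ vertices, Lemma~\ref{lem:pdist_diam} for the non-hole vertices, and a per-hole inequality $\sum_i \pdist(v_i^j,D)\leq \pper_D(H_j)+(n_j-2)\diam(D)$. That per-hole inequality is precisely Lemma~\ref{lem:key}, which the paper states and proves immediately before this theorem; had you invoked it, your proof would be complete and identical to the paper's. The gap is that you instead try to prove it, and your sketch does not go through.

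Two concrete problems. First, the reduction to triangles by fan-triangulating $H_j$ from an interior point $p$ gives the wrong bookkeeping: summing the triangle inequality over the $n_j$ triangles and feeding the result back through Lemma~\ref{lem:partition} (which makes the $n_j$ coincident copies of $p$ appear as a degenerate hole of $\pper_D$ equal to $2\pdist(p,D)$) yields $\sum_i\pdist(v_i^j,D)\leq \pper_D(H_j)+n_j\diam(D)-2\pdist(p,D)$, which is \emph{weaker} than the target since $\pdist(p,D)<\diam(D)$ for $p$ interior to $D$ by Lemma~\ref{lem:pdist_diam}. The paper instead cuts off one ear at a time: the $(m-1)$-gon and the triangle share exactly two vertices, whose $\pdist$ terms cancel against the extra terms produced by Lemma~\ref{lem:partition}, and the induction closes. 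Second, and more seriously, the triangle case is the genuinely hard step and ``Lemma~\ref{lem:signed} plus Minkowski convexity plus exterior angles summing to $2\pi$'' is not a proof of it. The identity you need is $\sum_1^3\pdist(a_i,D)-\pper_D(a_1a_2a_3)=\int_0^\pi\dist(\ell_\theta^{mid},\ell_\theta(D))\,d\theta$, and bounding the right side by $\diam(D)$ cannot follow from $\pdist(a_i,D)\leq\diam(D)$ alone: that only gives $3\diam(D)-\pper_D$, which exceeds $\diam(D)$ whenever $\pper_D(a_1a_2a_3)<2\diam(D)$. The paper's argument is a variational one: convexity shows the maximizer has at least two vertices on $\partial D$; a perturbation argument (moving a boundary vertex along $\partial D$ and tracking the first-order change of the integral via Lemma~\ref{lem:signed}) forces the normals at those vertices to be interior angle bisectors; and Balitskiy's Lemma~\ref{lem:bal}, applied to the diametrically opposite points, then forces $\per(a_1a_2a_3)\geq 2\diam(D)$, which closes the estimate. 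None of these ideas is present in your sketch, so the key inequality remains unproved in your proposal.
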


Before proving Theorem \ref{thm:mainp} we give a proof to the key lemma.

\begin{lemma}\label{lem:key}
For any convex polygon $P$ with vertices $a_i$, $1\leq i\leq m$, inside a constant width body $D$,
$$\sum_1^m \pdist(a_i, D)\leq \pper_D(P)+(m-2)\diam(D).$$ 
\end{lemma}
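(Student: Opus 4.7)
My plan is to prove Lemma~\ref{lem:key} by induction on $m$, with two base cases and an inductive step for $m \ge 4$ based on a diagonal-splitting identity that plays the role of Lemma~\ref{lem:partition} for the convex polygon $P$.

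The trivial base case will be $m = 2$: here $P$ degenerates to a segment $a_1 a_2$, whose two supporting lines parallel to $u_\theta$ pass exactly through the endpoints, so by Definition~\ref{def:pper} one immediately gets $\pper_D(a_1 a_2) = \pdist(a_1,D) + \pdist(a_2,D)$, and since $(m-2)\diam(D) = 0$ the inequality holds with equality.

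For the inductive step with $m \ge 4$ I will pick any diagonal $a_i a_j$ of $P$ and let $P_1, P_2$ be the two resulting convex polygons sharing this diagonal, with $|P_1| + |P_2| = m + 2$ and both having strictly fewer than $m$ vertices. The crux is the splitting identity
\[
\pper_D(P_1) + \pper_D(P_2) \;=\; \pper_D(P) + \pdist(a_i, D) + \pdist(a_j, D),
\]
which I plan to verify by direct integrand comparison at generic $\theta$: depending on whether the $v_\theta$-extreme vertices of $P$ fall into $P_1$ or $P_2$, the extremes of $P_1, P_2$ are either inherited from those of $P$ or attained at the diagonal endpoints $a_i, a_j$, and a short case analysis using the elementary identity $|\max(x,y)| + |\min(x,y)| = |x| + |y|$ for real $x,y$ forces equality. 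This identity is morally Lemma~\ref{lem:partition} applied to $P$ partitioned by a diagonal, where only the two degree-$3$ endpoints $a_i, a_j$ contribute nontrivially (the other polygon vertices have degree $2$ in the split graph and match automatically on both sides of the comparison). Adding the inductive hypotheses for $P_1$ and $P_2$, substituting the identity, and cancelling the repeated $\pdist(a_i, D) + \pdist(a_j, D)$ terms will yield the claim for~$P$.

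The main obstacle will be the triangle base case $m = 3$: a triangle has no diagonals, so the inductive mechanism does not apply, and the problem is equivalent to showing
\[
\int_0^{2\pi} |f_{\mathrm{mid}}(\theta)|\,d\theta \;\le\; 2\,\diam(D),
\]
where $f_{\mathrm{mid}}(\theta)$ is the median of the three signed offsets $f_i(\theta) = (a_i - q_\theta)\cdot v_\theta$. My intended attack is to apply Lemma~\ref{lem:partition} directly to a partition of $D$ consisting of $\triangle$ together with three outer convex regions $R_1, R_2, R_3$ cut out by rays from the $a_i$ to suitably chosen boundary points $b_i \in \partial D$, arranged so that every partition vertex has degree $3$. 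Since $\pper_D(D) = \per(D) = \pi\,\diam(D)$ (Barbier's theorem) and $\pdist(b_i, D) = \diam(D)$ by Lemma~\ref{lem:pdist_diam}, this will reduce the triangle case to the sharp upper bound $\sum_{i=1}^3 \pper_D(R_i) \le (\pi + 4)\,\diam(D)$, which is tight in the degenerate limit where $\triangle$ collapses to a single boundary point of $D$. Proving this bound on the outer pieces is the technical heart of the argument and is where I expect the real difficulty to lie: it will need to exploit both the constant-width hypothesis on $D$ and the joint geometric constraints binding $R_1, R_2, R_3$, most likely by taking each $a_i b_i$ along a normal line of $\partial D$ at $b_i$ and then invoking Lemma~\ref{lem:signed} to control the arc-length contributions of $\partial D$ inside each $R_i$.
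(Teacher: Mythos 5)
Your reduction of the general case to triangles is essentially the paper's own first step: splitting $P$ along a diagonal and using the identity $\pper_D(P_1)+\pper_D(P_2)=\pper_D(P)+\pdist(a_i,D)+\pdist(a_j,D)$ (which is exactly Lemma \ref{lem:partition} applied to the two-piece partition of $P$) is correct, and your plan to verify it by pointwise comparison of integrands at generic $\theta$ is sound. The $m=2$ base case is superfluous, since the induction for $m\geq 4$ only ever produces pieces with at least three vertices, but it is harmless.

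The genuine gap is the triangle case, which is the entire substance of the lemma, and your proposal does not prove it --- it only reformulates it. Passing to the complementary regions $R_1,R_2,R_3$ via Lemma \ref{lem:partition} and Barbier's theorem does correctly convert the claim into $\sum_{i=1}^3\pper_D(R_i)\leq(\pi+4)\diam(D)$, but this is arithmetically \emph{equivalent} to the original inequality, not a simplification of it; you acknowledge yourself that the real difficulty has merely been relocated, and no mechanism is offered that would actually close it. In particular, nothing in your sketch explains why the constant-width hypothesis forces the bound, and the choice of the $b_i$ along normals is left as a hope rather than an argument. For comparison, the paper attacks the triangle directly: it writes $\sum\pdist(a_i,D)-\pper_D(a_1a_2a_3)$ as $\int_0^{\pi}\dist(\ell_{\theta}^{mid},\ell_{\theta}(D))\,d\theta$ over the interjacent lines, uses convexity and compactness to place two vertices of an extremal triangle on $\partial D$, runs a variational (first-order perturbation) argument to show the normals at those two vertices must be interior angle bisectors, and then invokes Balitskiy's lemma (Lemma \ref{lem:bal}) together with the fact that diametrically opposite points are at distance $\diam(D)$ to conclude $\per(a_1a_2a_3)\geq 2\diam(D)$, from which the bound follows via $\sum\pdist(a_i,D)\leq 3\diam(D)$ and $\pper_D\geq\per$. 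Some such extremality-plus-rigidity input appears unavoidable, and your proposal contains no substitute for it.
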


\begin{proof}
First, we show by induction that it is sufficient to prove this lemma for triangles. Indeed, if we already know that the statement holds for all polygons with not more than $m$ sides, $m\geq 3$, then we can partition an $m$-gon $P$ into an $(m-1)$-gon $P_1$ with the set of vertices $S_1$ and a triangle $P_2$ with the set of vertices $S_2$. By the inductive hypothesis, $$\sum_{a\in S_1} \pdist(a, D) + \sum_{a\in S_2} \pdist(a,D) \leq \pper_D(P_1)+(m-3)\diam(D)+\pper_D(P_2)+\diam(D).$$
By Lemma \ref{lem:partition},
$$\pper_D(P_1)+\pper_D(P_2)=\pper_D(P) + \sum_{a\in S_1\cap S_2}\pdist(a,D).$$
Substituting this sum in the inequality above we get the required bound for the $m$-gon $P$.

For a triangle $a_1a_2a_3$,
\begin{equation}\label{eqn:mid}
\sum_1^3 \pdist(a_i, D)- \pper_D(a_1a_2a_3)=\int\limits_0^{\pi} \dist(\ell_{\theta}^{mid}, \ell_{\theta}(D)) d\theta,
\end{equation}
where $\ell_{\theta}^{mid}$ is the interjacent line of the triangle in direction of $u_\theta$, that is, $\ell_{\theta}^{mid}$ goes through a vertex and crosses the triangle. If $a_1a_2a_3$ is a degenerate triangle, one point or a line segment, the integral above is the same as $\pdist(a,D)$ for one of the vertices $a$ so it is not larger than $\diam(D)$ by Lemma \ref{lem:pdist_diam}.

For nondegenerate triangles, the convexity of the integral in (\ref{eqn:mid}) (the proof of convexity is analogous to the proof of Proposition \ref{prop:pseudometric}) implies that among all triangles with parallel sides, the maximum of $\sum \pdist(a_i, D)$ is necessarily attained on a triangle with at least two vertices on the boundary of $D$.

Each curve of constant width can be approximated by a smooth curve of constant width (see, for instance, \cite{T76, W77}) so we assume that $D$ has a smooth boundary. We can also assume that the curvature of the boundary is bounded from above (for instance, by approximating $D$ with $D+\epsilon \omega$, where $\omega$ is a circle with center at the origin and unit radius). The function $\pdist(a_1,D)+\pdist(a_2,D)+\pdist(a_3,D) - \pper_D(a_1a_2a_3)$ is continuous with respect to $a_1$, $a_2$, $a_3 \in D$ so it reaches its maximum at some triple of points. We consider the triple where it is maximal and, using the argument from above, assume $a_1$ and $a_2$ are on the boundary of $D$. We also assume that the boundary of $D$ is parametrized by $\gamma(\theta)$.

For the next step of the proof, we show that normal lines trough $a_1$ and $a_2$ are necessarily interior angle bisectors of $\triangle a_1a_2a_3$. Assume the normal line at $a_1$ is not the angle bisector of $\triangle a_1a_2a_3$.
The idea of the proof is to move $a_1$ along the curve by a small distance $\Delta$ and show that the integral can increase with this move. When doing so, all interjacent lines $\ell_{\theta}^{mid}$ through $a_1$ change linearly in terms of $\Delta$ and we will show that this change entails the linear part of the change in \ref{eqn:mid}. At the same time, the measure of $\theta$ such that lines $\ell_{\theta}^{mid}$ change a vertex they pass through is also of linear size in terms of $\Delta$ so in total the change of this kind gives only an $O(\Delta^2)$ error. Overall, the change is linear in $\Delta$ so we can move $a_1$ to increase the integral.

Let us write these arguments in more detail. Denote by $\ell_{\theta}^1$, $\ell_{\theta}^2$, $\ell_{\theta}^3$ the lines with direction of $u_\theta$ through $a_1$, $a_2$, $a_3$, respectively. Assume $\ell_{\theta}^1$ is an interjacent line when $\theta\in[\theta_1,\theta_2]$, $\ell_{\theta}^2$ is an interjacent line when $\theta\in[\theta_2,\theta_3]$, and $\ell_{\theta}^3$ is an interjacent line when $\theta\in[\theta_3,\theta_1+\pi]$. Also assume $\ell_{\theta^*}$ is a normal line through $a_1$ and $\theta^*\in(\theta_1,\theta_2)$ but $\theta^*\neq (\theta_1+\theta_2)/2$ (the case when $\theta^*\notin(\theta_1,\theta_2)$ will be considered later). Under this notation,
$$\int\limits_0^{\pi} \dist(\ell_{\theta}^{mid}, \ell_{\theta}(D)) d\theta = \int\limits_{\theta_1}^{\theta_2} \dist(\ell_{\theta}^1, \ell_{\theta}(D)) d\theta + \int\limits_{\theta_2}^{\theta_3} \dist(\ell_{\theta}^2, \ell_{\theta}(D)) d\theta + \int\limits_{\theta_3}^{\theta_1+\pi} \dist(\ell_{\theta}^3, \ell_{\theta}(D)) d\theta.$$

Now we vary $a_1$ by moving it along the boundary of $D$ to a point $a_1'$ so that $|a_1a_1'|=\Delta$ and denote by $\ell_{\theta}^{1'}$ a line in direction of $u_{\theta}$ through $a_1'$. At this point our goal is to show that the change of $\int_0^{\pi} \dist(\ell_{\theta}^{mid}, \ell_{\theta}(D)) d\theta$ is linear with respect to $\Delta$. First, we denote new interjacent lines by $\ell_{\theta}^{mid'}$ and note that $\dist(\ell_{\theta}^{mid}, \ell_{\theta}(D)) - \dist(\ell_{\theta}^{mid'}, \ell_{\theta}(D))\in O(\Delta)$ for all $\theta$. Second, we assume that new angles where interjacent lines change are $\theta_1'$, $\theta_2'$, and $\theta_3'$ and note that $\theta_i-\theta_i'\in O(\Delta)$ for $i=1,2,3$. Then

$$\int\limits_0^{\pi} \dist(\ell_{\theta}^{mid'}, \ell_{\theta}(D)) d\theta = \int\limits_{\theta_1'}^{\theta_2'} \dist(\ell_{\theta}^{1'}, \ell_{\theta}(D)) d\theta + \int\limits_{\theta_2'}^{\theta_3'} \dist(\ell_{\theta}^2, \ell_{\theta}(D)) d\theta + \int\limits_{\theta_3'}^{\theta_1'+\pi} \dist(\ell_{\theta}^3, \ell_{\theta}(D)) d\theta$$
$$=\int\limits_{\theta_1}^{\theta_2} \dist(\ell_{\theta}^{1'}, \ell_{\theta}(D)) d\theta + \int\limits_{\theta_2}^{\theta_3} \dist(\ell_{\theta}^2, \ell_{\theta}(D)) d\theta + \int\limits_{\theta_3}^{\theta_1+\pi} \dist(\ell_{\theta}^3, \ell_{\theta}(D)) d\theta + O(\Delta^2)$$
so the linear part of the change may show up only in the first integral.

Using Lemma \ref{lem:signed} we get
$$\int\limits_{\theta_1}^{\theta_2} \dist(\ell_{\theta}^{1}, \ell_{\theta}(D)) d\theta = \int\limits_{\theta_1}^{\theta^{*}} (a_1-\gamma(\theta))\cdot v_{\theta}\, d\theta + \int\limits_{\theta^{*}}^{\theta_2} (\gamma(\theta)-a_1)\cdot v_{\theta}\, d\theta$$
$$=(a_1-\gamma(\theta_1))\cdot u_{\theta_1} - 2(a_1-\gamma(\theta^*))\cdot u_{\theta^*} + (a_1-\gamma(\theta_2))\cdot u_{\theta_2}$$
$$=a_1\cdot(u_{\theta_1}-2u_{\theta^*}+u_{\theta_2}) -\gamma(\theta_1)\cdot u_{\theta_1}+2\gamma(\theta^*)\cdot u_{\theta^*}-\gamma(\theta_2)\cdot u_{\theta_2}.$$														

Assume $\ell_{\theta^{**}}$ is the normal line at $a_1'$, $\theta^{**}\in(\theta_1,\theta_2)$. Note that $\theta^{**}-\theta^{*}\in O(\Delta)$ because the curvature of the boundary of $D$ is bounded from above. Then
$$\int\limits_{\theta_1}^{\theta_2} \dist(\ell_{\theta}^{1'}, \ell_{\theta}(D)) d\theta = \int\limits_{\theta_1}^{\theta^{**}} (a_1'-\gamma(\theta))\cdot v_{\theta}\, d\theta + \int\limits_{\theta^{**}}^{\theta_2} (\gamma(\theta)-a_1')\cdot v_{\theta}\, d\theta$$
$$ = \int\limits_{\theta_1}^{\theta^{*}} (a_1'-\gamma(\theta))\cdot v_{\theta}\, d\theta + \int\limits_{\theta^{*}}^{\theta_2} (\gamma(\theta)-a_1')\cdot v_{\theta}\, d\theta + O(\Delta^2)$$
$$=a_1'\cdot(u_{\theta_1}-2u_{\theta^*}+u_{\theta_2}) -\gamma(\theta_1)\cdot u_{\theta_1}+2\gamma(\theta^*)\cdot u_{\theta^*}-\gamma(\theta_2)\cdot u_{\theta_2} + O(\Delta^2).$$														
$$=\int\limits_{\theta_1}^{\theta_2} \dist(\ell_{\theta}^{1}, \ell_{\theta}(D)) d\theta + (a_1'-a_1)\cdot (u_{\theta_1}-2u_{\theta^*}+u_{\theta_2})  + O(\Delta^2).$$
Due to the smoothness of the boundary of $D$, $a_1'-a_1=\pm\Delta\, v_{\theta^*} + O(\Delta^2)$ so
$$\int\limits_{\theta_1}^{\theta_2} \dist(\ell_{\theta}^{1'}, \ell_{\theta}(D)) d\theta - \int\limits_{\theta_1}^{\theta_2} \dist(\ell_{\theta}^{1}, \ell_{\theta}(D)) d\theta = \pm\Delta\, v_{\theta^*} \cdot (u_{\theta_1}-2u_{\theta^*}+u_{\theta_2})  + O(\Delta^2).$$
The dot product $v_{\theta^*} \cdot (u_{\theta_1}-2u_{\theta^*}+u_{\theta_2})$ is not zero since $\theta^*\neq (\theta_1+\theta_2)/2$ so there is a linear component in the change of the integral. We choose the direction of the move depending on the sign of the dot product to ensure that the difference is positive and the integrals increases. Overall, we conclude the maximal value cannot be attained on a triangle unless the normal line through $a_1$ is the interior angle bisector of $\triangle a_1a_2a_3$. 

In the case $\theta^*<\theta_1$, there is a direction along the boundary of $D$ such that moving $a_1$ in this direction increases all $\dist(\ell_{\theta}^1,\ell_{\theta}(D))$ linearly in $\Delta$. This move entails a linear increase of the integral from (\ref{eqn:mid}). The other cases for $\theta^*$ are analogous with additional $O(\Delta^2)$ terms for $\theta^*=\theta_1$ or $\theta_2$ (because $\theta^{**}$ may jump inside the interval $(\theta_1,\theta_2)$).

We have showed that the normal line at $a_1$ is the angle bisector of the triangle. Analogously, the normal line at $a_2$ must be the interior angle bisector as well.

For the last step of the proof, we will use the following geometric fact by Balitskiy.
\begin{lemma}\label{lem:bal}\cite[Proof of Theorem 4.1]{Bal16}
Let points $y_1$ and $y_2$ be chosen on the interior angle bisectors of $\triangle x_1x_2x_3$ from $x_1$ and $x_2$, respectively. If $|x_1y_1|=|x_2y_2|\geq \frac 1 2 \per(x_1x_2x_3)$, then $|y_1y_2|>|x_1y_1|$.
\end{lemma}

Now we choose the points $b_1$ and $b_2$ in $D$ diametrically opposite to $a_1$ and $a_2$, respectively. We know that $|a_1b_1|=|a_2b_2|=\diam(D)$ and $|b_1b_2|<\diam(D)$. By Lemma \ref{lem:bal}, $\per(a_1a_2a_3)\geq 2|a_1b_1|=2\diam(D)$. Then, as required,
$$\sum_1^3 \pdist(a_i, D)- \pper_D(a_1a_2a_3)\leq 3\diam(D)-\per(a_1a_2a_3) \leq \diam(D).$$
\end{proof}

\begin{proof}[Proof of Theorem \ref{thm:mainp}]
By Euler's formula, the number of partition vertices is $2(k+l-1)$. Then $$\sum_{i=1}^k \pper_D(C_i) = \per(D)+\sum_{j=1}^{2(k+l-1)} \pdist(v_j, D)- \sum_{i=1}^l \pper_D(H_i)$$
Using Lemma \ref{lem:key} for all holes we get
$$\sum_{i=1}^k \pper_D(C_i) \leq \per(D)+\sum_{v_j\notin\cup H_i} \pdist(v_j, D)+\sum_{v_j\in \cup H_i} \diam(D) - 2 l\ \diam(D)$$
Finally, by Lemma \ref{lem:pdist_diam}, all $\pdist(v_j, D)\leq \diam(D)$, so
$$\sum_{i=1}^k \pper_D(C_i)\leq \per(D) + 2(k+l-1) \diam (D) - 2l\ \diam(D)$$
$$=\per(D)+2(k-1)\diam(D).$$
\end{proof}

\section{Acknowledgments}

The authors would like to thank Filip Mori\'{c}, Rom Pinchasi, Ilya Bogdanov, and Sergei Tabachnikov for useful comments and discussions that eventually led to the current text. A.G. was partially supported by the NSF grant DMS-2054536.

\bibliographystyle{amsalpha}

\end{document}